\numberwithin{equation}{section}
\newcounter{item}[section]
\newcounter{kirshr}
\newcounter{kirsha}
\newcounter{kirshb}
\newtheorem{theorem}{Theorem}[section]
\newtheorem{proposition}[theorem]{Proposition}
\newtheorem{lemma}[theorem]{Lemma}
\newtheorem{corollary}[theorem]{Corollary}
\theoremstyle{definition}
\newtheorem{definition}[theorem]{Definition}
\def\(R)RA{{\bf (R)RA}}
\title{A Characterization of Modules with Cyclic Socle}
\author{Ali Assem\\\small{Department of Mathematics}\\\small{aassem@sci.cu.edu.eg}}
\begin{document}
\maketitle
\begin{abstract}In 2009, J. Wood \cite{r7} proved that Frobenius bimodules have the extension property for symmetrized weight compositions. Later, in \cite{r4}, it was proved that having a cyclic socle is sufficient for satisfying the property, while the necessity remained an open question.
\newline Here, landing in  Midway, the necessity is proved, a module alphabet $_RA$ has the extension property for symmetrized weight compositions built on $\mathrm{Aut}_R(A)$ is necessarily having a cyclic socle.
 \end{abstract}
\textbf{Note:} All rings are finite with unity, and all modules are finite too. This may be re-emphasized in some statements. The convention for functions is that inputs are to the left.

\setcounter{section}{0}
\section{Introduction}

A (left) linear code of length $n$ over a module alphabet $_RA$ is a (left) submodule $C\subset A^n$. $A$ has the \emph{extension property} (EP) for the weight $w$ if for any $n$ and any two codes $C_1,C_2\subset A^n$, any isomorphism $f: C_1\rightarrow C_2$ preserving $w$ extends to a monomial transformation of $A^n$. In 1962, MacWilliams \cite{Mac} proved the Hamming weight EP  for linear codes over finite fields; in 1996, H. Ward and J. Wood \cite{r1996} re-proved this using the linear independence of group characters.
This kind of proofs -- using characters -- led to further generalities. In 1997, J. Wood \cite{r5} proved that Frobenius rings have the EP for \emph{symmetrized weight compositions} (swc), and in his 1999-paper \cite{duality}, proved that Frobenius rings have the property for Hamming weight. Besides, for the last case, a partial converse was proved: commutative rings satisfying the EP for Hamming weight are necessarily Frobenius.

In 2004, Greferath et al.\cite{2004} showed that Frobenius bimodules do have the EP for Hamming weight. In \cite{r2}, Dinh and L\'{o}pez-Permouth suggested a strategy for proving the full converse. The strategy has three parts. (1) If a finite ring is not Frobenius, its socle contains a matrix module of a particular type. (2) Provide a counter-example to the EP in the context of linear codes over this special module. (3) Show that this counter example over the matrix module pulls back to give a counter example over the original ring.  Finally, in 2008, J. Wood \cite{r6} provided the main technical result for carrying out the strategy, and thereby proving that rings having the EP for Hamming weight are necessarily Frobenius. The proof was easily adapted in \cite{r7} (2009) to prove that a module alphabet $_RA$ has the EP for Hamming weight if and only if $A$ is pseudo-injective with cyclic socle.

On the other lane, in \cite{r7}, J. Wood proved that Frobenius bimodules have the EP for swc, and in \cite{r4} it was shown that having a cyclic socle is sufficient (Theorem \ref{Noha}), while the necessity remained an open question. Here, the necessity is proved,  making use of a new notion, namely, the \emph{annihilator weight}, defined in section 4 below.

\section{Background in Ring Theory}
Let $R$ be a finite ring with unity, denote by $\mathrm{rad}R$ its Jacobson radical, by the Wedderburn-Artin theorem (and Wedderburn's little theorem) the ring $R/\mathrm{rad}R$ is semi-simple, and (as rings)
\begin{equation}\label{iso1}R/\mathrm{rad}R\cong \overset{k}{\underset{i=1}\bigoplus}\mathbb{M}_{\mu_i}(\mathbb{F}_{q_i}),\end{equation}
where each $q_i$ is a prime power, $\mathbb{F}_{q_i}$ denotes a finite field of order $q_i$, and $\mathbb{M}_{\mu_i}(\mathbb{F}_{q_i})$ denotes the ring of $\mu_i\times\mu_i$ matrices over $\mathbb{F}_{q_i}$.

It follows that, as left $R$-modules, \begin{equation}\label{iso2}_R(R/\mathrm{rad}R)\cong \overset{k}{\underset{i=1}\bigoplus}\mu_iT_i,\end{equation} where $_RT_i$ is the pullback to $R$ of the matrix module $_{\mathbb{M}_{\mu_i}(\mathbb{F}_{q_i})}\mathbb{M}_{\mu_i\times 1}(\mathbb{F}_{q_i})$ via the isomorphism in equation (\ref{iso1}). It is known that these $T_i$'s  form the  complete list, up to isomorphism, of all simple left $R$-modules, hence the socle of any $R$-module $A$ can be expressed as $$\mathrm{soc}(A)\cong\overset{k}{\underset{i=1}\bigoplus}s_iT_i,$$ where $s_i$ is the number of copies of $T_i$ inside $A$.

\setlength{\parindent}{0cm}The next two propositions can be found in \cite{r7}, page 17.

\begin{proposition}\label{prop}$\mathrm{soc}(A)$ is cyclic if and only if $s_i\leq\mu_i$ for $i=1,\ldots,k;\;\mu_i$ defined as above.\end{proposition}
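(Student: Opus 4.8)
The plan is to reduce the statement to a counting of isotypic multiplicities in a semisimple module. First I would recall the standard characterization that a left $R$-module $M$ is cyclic if and only if it is a homomorphic image of the left regular module $_RR$, i.e.\ there is a surjective $R$-homomorphism $_RR\twoheadrightarrow M$. I apply this with $M=\mathrm{soc}(A)$.

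Next, since $\mathrm{soc}(A)$ is semisimple it is annihilated by $\mathrm{rad}R$, so every $R$-homomorphism $_RR\to\mathrm{soc}(A)$ kills $\mathrm{rad}R$ and hence factors through the quotient map $_RR\twoheadrightarrow{}_R(R/\mathrm{rad}R)$. Therefore $\mathrm{soc}(A)$ is cyclic if and only if it is a homomorphic image of $_R(R/\mathrm{rad}R)$. Using the isomorphisms (\ref{iso1}) and (\ref{iso2}), together with the fact (recalled in the excerpt) that the $T_i$ form the complete list of pairwise non-isomorphic simple left $R$-modules, this last question becomes: when is $\bigoplus_{i=1}^{k}s_iT_i$ a homomorphic image of $\bigoplus_{i=1}^{k}\mu_iT_i$?

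I would settle this by a Jordan--H\"older / Schur argument. For the ``only if'' direction: given a surjection $\bigoplus_i\mu_iT_i\twoheadrightarrow\bigoplus_i s_iT_i$ with kernel $K$, the exact sequence $0\to K\to\bigoplus_i\mu_iT_i\to\bigoplus_i s_iT_i\to0$ shows that the multiplicity of each simple $T_i$ among the composition factors of the quotient is at most its multiplicity in the middle term; since the $T_i$ are pairwise non-isomorphic, these multiplicities are exactly $s_i$ and $\mu_i$, so $s_i\le\mu_i$. For the ``if'' direction: if $s_i\le\mu_i$ for all $i$, then for each $i$ the projection $\mu_iT_i\twoheadrightarrow s_iT_i$ onto a choice of $s_i$ of the summands is surjective, and the direct sum of these projections gives a surjection $_R(R/\mathrm{rad}R)\cong\bigoplus_i\mu_iT_i\twoheadrightarrow\bigoplus_i s_iT_i\cong\mathrm{soc}(A)$, whence $\mathrm{soc}(A)$ is cyclic.

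There is no real obstacle; the only points needing care are the reduction through $R/\mathrm{rad}R$ (which rests on $\mathrm{soc}(A)$ being semisimple, so that $\mathrm{rad}R$ acts as zero) and the bookkeeping of the isotypic multiplicities, for which the structural identification of the $T_i$ already supplied in the excerpt is precisely what is needed. If one prefers to avoid composition-factor language, the multiplicity count can instead be carried out after passing to $R/\mathrm{rad}R$-modules and invoking Wedderburn's theorem, reducing to the elementary fact that over a division ring $D$ the module $D^{\,m}$ surjects onto $D^{\,n}$ if and only if $n\le m$, applied componentwise.
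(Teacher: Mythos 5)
Your argument is correct, and it is the standard proof of this fact: the paper itself gives no proof, merely citing Wood's lecture notes (reference \cite{r7}, p.~17), where the same reduction is used --- a semisimple module is cyclic iff it is a quotient of $_R(R/\mathrm{rad}R)\cong\bigoplus_i\mu_iT_i$, and the multiplicity comparison then yields $s_i\le\mu_i$. Both the factorization through $R/\mathrm{rad}R$ and the isotypic bookkeeping are handled correctly, so nothing is missing.
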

\begin{proposition}$\mathrm{soc}(A)$ is cyclic if and only if $A$ can be embedded into $_R\widehat{R}$, the character group of $R$ equipped with the standard module structure.\end{proposition}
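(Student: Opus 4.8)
The plan is to identify ${}_R\widehat{R}$ with the $\mathbb{Z}$-character dual of the right regular module $R_R$; thus $\widehat{R}=\mathrm{Hom}_{\mathbb{Z}}(R,\mathbb{Q}/\mathbb{Z})$, with the ``standard'' left action $(r\chi)(x)=\chi(xr)$. Two classical features of this construction do all the work. First, $M\mapsto\widehat{M}=\mathrm{Hom}_{\mathbb{Z}}(M,\mathbb{Q}/\mathbb{Z})$ is an exact contravariant equivalence between finite left $R$-modules and finite right $R$-modules; it carries simple modules to simple modules and interchanges the socle of a module with its top (largest semisimple quotient). Second, ${}_R\widehat{R}$ is an injective left $R$-module, being the character dual of the free (hence flat) module $R_R$. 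Granting these, I first compute $\mathrm{soc}({}_R\widehat{R})$. By the socle--top interchange, $\mathrm{soc}({}_R\widehat{R})\cong\widehat{(R/\mathrm{rad}R)_R}$; the Wedderburn decomposition (\ref{iso1}) also gives $(R/\mathrm{rad}R)_R\cong\bigoplus_{i=1}^{k}\mu_iT_i'$ with $T_i'$ the simple right modules and $\widehat{T_i'}\cong T_i$ after matching blocks, so
\[
\mathrm{soc}({}_R\widehat{R})\;\cong\;\bigoplus_{i=1}^{k}\mu_i T_i .
\]

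For the ``if'' direction: if $A$ embeds in ${}_R\widehat{R}$, then $\mathrm{soc}(A)$ embeds in $\mathrm{soc}({}_R\widehat{R})\cong\bigoplus_i\mu_iT_i$, and comparing multiplicities of composition factors (Jordan--H\"older) forces $s_i\le\mu_i$ for all $i$; hence $\mathrm{soc}(A)$ is cyclic by Proposition \ref{prop}.

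For the ``only if'' direction: assume $\mathrm{soc}(A)$ is cyclic, so $s_i\le\mu_i$ for all $i$ by Proposition \ref{prop}. Then $\mathrm{soc}(A)\cong\bigoplus_i s_iT_i$ embeds into $\bigoplus_i\mu_iT_i\cong\mathrm{soc}({}_R\widehat{R})\subseteq{}_R\widehat{R}$; call this embedding $\iota$. Since ${}_R\widehat{R}$ is injective, $\iota$ extends along the inclusion $\mathrm{soc}(A)\subseteq A$ to a homomorphism $g\colon A\to{}_R\widehat{R}$. It remains to check that $g$ is injective: $\ker g$ is a submodule of the finite (hence Artinian) module $A$ satisfying $\ker g\cap\mathrm{soc}(A)=0$, because $g$ restricts to the monomorphism $\iota$ on $\mathrm{soc}(A)$; but every nonzero submodule of $A$ contains a simple submodule, which necessarily lies in $\mathrm{soc}(A)$, so $\ker g=0$ and $A\hookrightarrow{}_R\widehat{R}$.

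I do not expect any real difficulty here: the only substantive ingredients are the socle computation above and the injectivity of ${}_R\widehat{R}$, both standard facts about character modules of finite rings, after which the argument is a two-line diagram chase resting on Proposition \ref{prop}. If one prefers to avoid the extension-of-injective step, the same computation shows ${}_R\widehat{R}\cong\bigoplus_i\mu_iE(T_i)$, where $E(-)$ denotes injective hull, and then, whenever $s_i\le\mu_i$, one gets $A\hookrightarrow E(A)=E(\mathrm{soc}A)\cong\bigoplus_i s_iE(T_i)\hookrightarrow\bigoplus_i\mu_iE(T_i)\cong{}_R\widehat{R}$. The essential point, either way, is to recognize ${}_R\widehat{R}$ as an injective module whose socle contains each $T_i$ with multiplicity exactly $\mu_i$.
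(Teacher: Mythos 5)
Your proof is correct, and it is essentially the standard argument for this fact: the paper itself gives no proof, citing Wood's CIMPA notes (\cite{r7}, p.~17), where precisely your two ingredients --- the injectivity of ${}_R\widehat{R}$ as the character dual of the free module $R_R$, and the socle computation $\mathrm{soc}({}_R\widehat{R})\cong\widehat{(R/\mathrm{rad}R)_R}\cong\bigoplus_i\mu_iT_i$ via the socle--top interchange --- combine with Proposition \ref{prop} and the essentiality of the socle in a finite module exactly as you describe. No gaps; both your main route and the variant via $E(\mathrm{soc}\,A)$ are sound.
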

\setlength{\parindent}{0.4cm}
\vspace{0.5cm}

The next theorem (Theorem 4.1, \cite{r6}), by J. Wood, was the key to carry out the strategy of Dinh and L\'{o}pez-Permouth mentioned in the introduction, actually, it displays a thoughtfully constructed piece-of-art example for the failure of the Hamming weight EP.
\begin{theorem}\label{wood}Let $R=\mathbb{M}_m(\mathbb{F}_q)$ and $A=\mathbb{M}_{m\times k}(\mathbb{F}_q)$. If $k>m$, there exist linear codes $C_+,C_-\subset A^N, N=\overset{k-1}{\underset{i=1}{\prod}}(1+q^i)$, and an $R$-linear isomorphism $f:C_+\rightarrow C_-$ that preserves Hamming weight, yet there is no monomial transformation extending $f$.\end{theorem}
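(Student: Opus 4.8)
The plan is to realise $C_{\pm}$ as images of two homomorphism tuples and to reduce everything to a combinatorial statement about the subspace lattice of $\mathbb{F}_q^k$. Since $R=\mathbb{M}_m(\mathbb{F}_q)$ is simple, $A\cong{}_RT^k$ where $T$ is the unique simple left $R$-module, $\mathrm{Aut}_R(A)\cong\mathrm{GL}_k(\mathbb{F}_q)$ acting by right multiplication, and the $R$-submodules of any $T^e$ are exactly the $T\otimes_{\mathbb{F}_q}V$ with $V\leq\mathbb{F}_q^e$ an $\mathbb{F}_q$-subspace. I would take $C_{\pm}$ to be the images of $R$-linear maps $\Lambda_{\pm}=(\lambda_{\pm,1},\dots,\lambda_{\pm,N})\colon M\to A^N$ with common information module $M={}_RA\cong T^k$ and $\lambda_{\pm,j}\in\mathrm{Hom}_R(M,A)$. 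Then the Hamming weight of the codeword indexed by $x\in M$ is $|\{\,j:\lambda_{\pm,j}(x)\neq 0\,\}|$; the assignment $f(\Lambda_+(x))=\Lambda_-(x)$ is a well-defined Hamming-weight-preserving $R$-isomorphism $C_+\to C_-$ exactly when both $\Lambda_{\pm}$ are injective and $|\{\,j:x\in\ker\lambda_{+,j}\,\}|=|\{\,j:x\in\ker\lambda_{-,j}\,\}|$ for every $x\in M$; and a monomial transformation of $A^N$ extends $f$ iff there are $\pi\in S_N$ and $g_j\in\mathrm{GL}_k(\mathbb{F}_q)$ with $\lambda_{-,j}=g_j\circ\lambda_{+,\pi(j)}$ for all $j$. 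Since post-composition by $\mathrm{Aut}_R(A)$ preserves kernels, $\mathrm{GL}_k(\mathbb{F}_q)$ permutes transitively the submodules of $A$ of each fixed isomorphism type, and any automorphism of such a submodule extends to $A$, the $\mathrm{Aut}_R(A)$-orbit of a homomorphism $M\to A$ is pinned down precisely by its kernel. Hence $f$ extends to a monomial transformation iff the multisets $\{\ker\lambda_{+,j}\}_j$ and $\{\ker\lambda_{-,j}\}_j$ coincide, and it suffices to exhibit two distinct multisets of subspaces of $\mathbb{F}_q^k$, of equal total size $N$, that induce the same weight function.

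The next point is that $x\in T\otimes V$ depends on $x\in M=T\otimes\mathbb{F}_q^k$ only through its row space $P(x)\leq\mathbb{F}_q^k$, which has $\dim P(x)\leq m$, and every subspace of dimension $\leq m$ arises this way. Writing $n_{\pm}(V)$ for the multiplicity of $V$ among the kernels on each side and $c=n_+-n_-$, weight preservation is therefore equivalent to
\[
\sum_{V\supseteq P}c(V)=0\qquad\text{for every }P\leq\mathbb{F}_q^k\text{ with }\dim P\leq m.
\]
This is where $k>m$ enters. By M\"obius inversion on the subspace lattice of $\mathbb{F}_q^k$, the function $c(V):=\mu(V,\mathbb{F}_q^k)=(-1)^{k-\dim V}q^{\binom{k-\dim V}{2}}$ satisfies $\sum_{V\supseteq P}c(V)=\delta_{P,\mathbb{F}_q^k}$, which vanishes for every $P$ with $\dim P\leq m<k$ (and could not vanish if $k\leq m$, consistent with the failure of the statement in that range). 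Splitting $c=c^+-c^-$ into positive and negative parts, so that $c^{\pm}(V)=q^{\binom{k-\dim V}{2}}$ according to the parity of $k-\dim V$, the $q$-binomial identities $\sum_{d=0}^{k}(-1)^d q^{\binom{d}{2}}\binom{k}{d}_q=\prod_{i=0}^{k-1}(1-q^i)=0$ and $\sum_{d=0}^{k}q^{\binom{d}{2}}\binom{k}{d}_q=\prod_{i=0}^{k-1}(1+q^i)$ give $\sum_V c^+(V)=\sum_V c^-(V)=\frac12\prod_{i=0}^{k-1}(1+q^i)=\prod_{i=1}^{k-1}(1+q^i)=N$.

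It then remains to realise these kernel multisets by actual homomorphisms and to check the resulting codes. For a subspace $V$ of codimension $d$ the number of $\lambda\in\mathrm{Hom}_R(M,A)$ with $\ker\lambda=T\otimes V$ equals $\prod_{l=0}^{d-1}(q^k-q^l)=q^{\binom{d}{2}}\prod_{j=k-d+1}^{k}(q^j-1)\geq q^{\binom{d}{2}}=|c(V)|$, so one may pick $c^+(V)$ distinct such homomorphisms to assemble $\Lambda_+$ and, independently, $c^-(V)$ of them for $\Lambda_-$. On each side the kernels occurring already intersect in $\{0\}$ (the codimension-$2$ subspaces on the $+$ side, the codimension-$1$ subspaces on the $-$ side), so both $\Lambda_{\pm}$ are injective; hence $f$ is a genuine Hamming-weight-preserving $R$-isomorphism $C_+\to C_-$, while no monomial transformation extends it because $n_+\neq n_-$ (for instance $M$ itself occurs as a kernel on exactly one side, according to the parity of $k$).

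The main obstacle is not the arithmetic but the two structural reductions: that the $\mathrm{Aut}_R(A)$-orbit of a homomorphism into $A\cong T^k$ is detected by its kernel alone — turning ``no monomial extension'' into inequality of two multisets of subspaces — and that the Hamming weight sees those subspaces only through row spaces of dimension $\leq m$, which both forces $k>m$ and singles out $\mu(\cdot,\mathbb{F}_q^k)$, up to scaling, as the coefficient pattern yielding $N=\prod_{i=1}^{k-1}(1+q^i)$. Once these are in place, the $q$-binomial evaluations and the realizability counts are routine.
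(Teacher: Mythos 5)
Your argument is correct, and it is essentially a reconstruction of the original proof: the paper itself does not prove this statement but imports it from Wood's \emph{Code equivalence characterizes finite Frobenius rings} \cite{r6}, where the codes are built in exactly this way, via multiplicity functions on kernels $T\otimes V$ given by the positive and negative parts of the M\"obius function of the subspace lattice of $\mathbb{F}_q^k$, with weight preservation reduced to $\sum_{V\supseteq P}\mu(V,\mathbb{F}_q^k)=0$ for row spaces $P$ of dimension at most $m<k$ and non-extendability read off from the inequality of the two kernel multisets. Your construction even reproduces the feature the paper later relies on, namely that $C_+$ has an identically zero component (the zero homomorphism, kernel $M$, has codimension $0$ and so always lies on the $+$ side --- your parenthetical ``according to the parity of $k$'' is the only, harmless, slip) while $C_-$ does not.
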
If $\mathrm{soc}(A)$ is not cyclic, then the previous theorem, applied to a certain submodule of $\mathrm{soc}(A)$, gives counter-examples that pull back to give counter-examples for the original module, as the proof of the following theorem shows (a detailed proof is found in \cite{r7}, Theorem 6.4).

\begin{theorem}(Th. 5.2, \cite{r6}). Let $R$ be a finite ring, and let $A$ be a finite left $R$-module. If there exists an index $i$ and a multiplicity $k>\mu_i$ so that $kT_i\subset \mathrm{soc}(A)\subset A$, then the extension property for Hamming weight fails for linear codes over the module $A$.\end{theorem}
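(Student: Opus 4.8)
The plan is to carry out the Dinh--L\'opez-Permouth pull-back scheme: transport Wood's counter-example of Theorem~\ref{wood} from a matrix module up to $A$ along the surjection $R\twoheadrightarrow \mathbb{M}_{\mu_i}(\mathbb{F}_{q_i})$. Write $m=\mu_i$, $q=q_i$, $R_i=\mathbb{M}_m(\mathbb{F}_q)$, and let $S$ be the $T_i$-isotypic component of $\mathrm{soc}(A)$, i.e.\ the sum of all simple submodules of $A$ isomorphic to $T_i$. Since $kT_i\subset\mathrm{soc}(A)$, the Jordan--H\"older theorem forces $S\cong sT_i$ with $s:=s_i\geq k>m$. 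Because $T_i$ is the pull-back of the column module $\mathbb{M}_{m\times 1}(\mathbb{F}_q)$ along $R\twoheadrightarrow R_i$, the left $R$-module $S$ is the pull-back of $B:=\mathbb{M}_{m\times s}(\mathbb{F}_q)$; as the $R$-action on $S$ factors through $R_i$, an additive endomorphism of $S$ is $R$-linear iff it is $R_i$-linear, whence $\mathrm{End}_R(S)=\mathrm{End}_{R_i}(B)$ and $\mathrm{Aut}_R(S)=\mathrm{Aut}_{R_i}(B)$.

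First I would apply Theorem~\ref{wood} to the ring $R_i$ and the module $B$, using $s>m$: this produces $N=\prod_{j=1}^{s-1}(1+q^j)$, linear codes $C_+,C_-\subset B^N$, and an $R_i$-linear Hamming isometry $f\colon C_+\to C_-$ admitting no monomial extension to $B^N$. Next I transport this datum to $A$. Identify $B$ with $S\subseteq\mathrm{soc}(A)\subseteq A$ via the pull-back isomorphism, so that $B^N\subseteq A^N$ and $C_\pm$ become submodules of $A^N$; since the $R$-action on $S^N$ factors through $R_i$, the $R_i$-linear map $f$ is automatically $R$-linear as a map between the $R$-linear codes $C_\pm\subset A^N$. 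Moreover $B$ and $A$ share the same zero element, so a coordinate of a vector in $B^N$ vanishes in $B$ exactly when it vanishes in $A$; hence the Hamming weight on $A^N$ restricts to the Hamming weight on $B^N$ and $f$ is a Hamming isometry of codes in $A^N$.

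It then remains to rule out a monomial extension of $f$ over $A^N$. Suppose, for contradiction, that a monomial transformation $\Phi$ of $A^N$ extends $f$; say $(\Phi x)_j=\tau_j\bigl(x_{\sigma(j)}\bigr)$ for some permutation $\sigma$ of $\{1,\dots,N\}$ and automorphisms $\tau_j\in\mathrm{Aut}_R(A)$. The isotypic component $S$ is fully invariant in $A$: post-composing any $R$-map $T_i\to A$ with an endomorphism of $A$ again has image in $S$. Therefore $\tau_j(S)=S$ and $\tau_j|_S\in\mathrm{Aut}_R(S)=\mathrm{Aut}_{R_i}(B)$ for each $j$, so $\Phi(B^N)\subseteq B^N$ and, by the very same formula, $\Phi|_{B^N}$ is a monomial transformation of $B^N$ over $R_i$. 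Since $C_\pm\subset B^N$ and $\Phi|_{C_+}=f$, this restriction is a monomial extension of $f$ to $B^N$, contradicting Theorem~\ref{wood}. Hence no such $\Phi$ exists, and the extension property for Hamming weight fails for linear codes over $A$.

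The one genuine point of care — the main obstacle — is the choice of the module to which Theorem~\ref{wood} is applied: an \emph{arbitrary} submodule of $\mathrm{soc}(A)$ isomorphic to $kT_i$ need not be stable under $\mathrm{Aut}_R(A)$, so the restriction argument of the previous paragraph could collapse. Passing instead to the full $T_i$-isotypic component $S\cong sT_i$ (where $s\geq k>\mu_i$, so Theorem~\ref{wood} still applies and, consistently with Proposition~\ref{prop}, $\mathrm{soc}(A)$ is non-cyclic) repairs this, since isotypic components of the socle are fully invariant. The remaining verifications — the $R$-linearity of $f$ and the compatibility of the two Hamming weights — are then routine.
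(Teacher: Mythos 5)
Your proof is correct, and its skeleton --- apply Theorem~\ref{wood} to a matrix module sitting inside $\mathrm{soc}(A)$ and transport the counter-example along the surjection $R\twoheadrightarrow\mathbb{M}_{\mu_i}(\mathbb{F}_{q_i})$ --- is exactly the Dinh--L\'opez-Permouth pull-back strategy behind the proof the paper cites (the paper itself only refers to \cite{r6,r7} for this theorem, but the same pull-back is spelled out in its proof of Corollary~\ref{cor2}). Where you genuinely diverge is in the final step, ruling out a monomial extension over $A^N$. The paper, following Wood, opens the black box of Theorem~\ref{wood}: the constructed $C_+$ has an identically zero coordinate while $C_-$ has none, and since a monomial transformation of $A^N$ permutes coordinates and applies automorphisms (which preserve ``identically zero''), it cannot carry $C_+$ onto $C_-$ over any enlarged alphabet. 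You instead keep Theorem~\ref{wood} as a black box and argue structurally: the $T_i$-isotypic component $S\cong s_iT_i$ of the socle is fully invariant, the $R$-action on it factors through $\mathbb{M}_{\mu_i}(\mathbb{F}_{q_i})$ so that $\mathrm{Aut}_R(S)=\mathrm{Aut}_{R_i}(B)$, and hence any $\mathrm{Aut}_R(A)$-monomial transformation of $A^N$ extending $f$ restricts to an $\mathrm{Aut}_{R_i}(B)$-monomial transformation of $B^N$ extending $f$, contradicting the stated conclusion of Theorem~\ref{wood}. Both arguments are sound. Yours is more modular --- it uses only the statement of Theorem~\ref{wood}, not an unrecorded feature of its proof --- and it correctly isolates the one delicate point, namely that one must pass to the full isotypic component (which is automorphism-stable) rather than an arbitrary copy of $kT_i$. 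The paper's route is shorter and needs no invariance argument, but it depends on the zero-column property of Wood's construction, which does not appear in the theorem statement as quoted.
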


\section{Symmetrized Weight Compositions}

\setlength{\parindent}{0.4cm}

\begin{definition}(Symmetrized Weight Compositions) Let $G$ be a subgroup of the automorphism group $\mathrm{Aut}_R(A)$ of a finite $R$-module $A$. Define an equivalence relation $\sim$ on $A$:  $a\sim b$ if $a=b\tau$ for some $\tau \in G$. Let $A/G$ denote the orbit space of this relation. The \emph{symmetrized weight composition} (swc) built on $G$ is a function

swc : $A^n \times A/G\rightarrow \mathbb{Q}$ defined by, $$\mathrm{swc}(x, a) = |\{i:x_i\sim a\}|,$$where $x=(x_1,\ldots,x_n)\in A^n$ and $a\in A/G$. Thus, swc counts the number of components in each orbit.\end{definition}

\begin{definition}(Monomial Transformation) Let $G$ be a subgroup of $\mathrm{Aut}_R(A)$, a map $T$ is called a $G$-\emph{monomial transformation} of $ A^n$ if there are some $\sigma\in S_n$ and $\tau_i\in G$ for $i=1,\ldots,n$, such that $$(x_1,\ldots,x_n)T=(x_{\sigma(1)}\tau_1,\ldots,x_{\sigma(n)}\tau_n),$$where  $(x_1,\ldots,x_n)\in A^n$.\end{definition}

\begin{definition}(Extension Property)  The alphabet $A$ has the \emph{extension
property} (EP) with respect to swc if for every $n$, and any two linear codes $ C_1, C_2\subset A^n $,  any
$R$-linear isomorphism $ f:C_1 \rightarrow C_2$   preserving swc is extends to a
$G$-monomial transformation of $ A^n$.\end{definition}

In \cite{r5}, J.A.Wood proved that Frobenius rings do have the extension property with respect to swc. Later, in \cite{r4}, it was shown that, more generally, a left $R$-module $A$ has the extension property with respect to swc if it can be embedded in the character group $\widehat{R}$ (given the standard module structure).

\begin{theorem}\label{Noha}(Th.4.1.3, \cite{r3}) Let $A$ be a finite left $R$-module. If A can be embedded into  $\widehat{R}$ (or equivalently, $\mathrm{soc}(A)$ is cyclic), then $A$ has the extension
property with respect to the swc built on any subgroup $G$ of $\mathrm{Aut}_R(A)$. In particular, this theorem applies to Frobenius bimodules.\end{theorem}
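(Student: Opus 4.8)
The plan is to recast the statement in the language of homomorphisms and then run a character (Fourier) argument in the tradition of MacWilliams and Ward--Wood. A linear code $C\subseteq A^n$ with its coordinate projections is the same datum as a finite left $R$-module $C$ equipped with maps $\lambda_1,\dots,\lambda_n\in\mathrm{Hom}_R(C,A)$, via $c\mapsto(\lambda_1(c),\dots,\lambda_n(c))$. So, given an swc-preserving $R$-isomorphism $f\colon C_1\to C_2$, set $C=C_1$, let $\lambda_i$ be the $i$-th coordinate map of $C_1$ and $\mu_i$ the $i$-th coordinate map of $C_2$ composed with $f$; then $\lambda_i,\mu_i\in\mathrm{Hom}_R(C,A)$, and swc-preservation says $|\{i:\lambda_i(c)\sim a\}|=|\{i:\mu_i(c)\sim a\}|$ for all $c\in C$, $a\in A$. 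The group $G$ acts on the right on $\mathrm{Hom}_R(C,A)$ by $\lambda\mapsto\lambda^{\tau}$, $\lambda^{\tau}(c)=\lambda(c)\tau$ — this lands in $\mathrm{Hom}_R(C,A)$ exactly because $\tau\in\mathrm{Aut}_R(A)$. It suffices to prove
\begin{equation}\label{eq:goal}
|\{i:\lambda_i\in O\}|=|\{i:\mu_i\in O\}|\qquad\text{for every }G\text{-orbit }O\subseteq\mathrm{Hom}_R(C,A),
\end{equation}
since from \eqref{eq:goal} one picks, orbit by orbit, a bijection between $\{i:\mu_i\in O\}$ and $\{i:\lambda_i\in O\}$; gluing these gives $\sigma\in S_n$ with $\mu_i$ and $\lambda_{\sigma(i)}$ always in the same orbit, hence $\mu_i=\lambda_{\sigma(i)}^{\tau_i}$ for suitable $\tau_i\in G$, and then $T\colon(x_1,\dots,x_n)\mapsto(x_{\sigma(1)}\tau_1,\dots,x_{\sigma(n)}\tau_n)$ is a $G$-monomial transformation of $A^n$ restricting to $f$ on $C_1$.

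The hypothesis enters through the second Proposition of Section 3: $\mathrm{soc}(A)$ cyclic is equivalent to an embedding $A\hookrightarrow{}_R\widehat{R}$, which I fix, identifying each $a\in A$ with a homomorphism $a\colon R\to\mathbb{Q}/\mathbb{Z}$ under which the action is $(ra)(x)=a(xr)$. Put $\varepsilon\colon A\to\mathbb{Q}/\mathbb{Z}$, $\varepsilon(a)=a(1)$. The decisive observation is that
\[
\mathrm{Hom}_R(C,A)\longrightarrow\widehat{C},\qquad \lambda\longmapsto\widehat{\lambda}:=\varepsilon\circ\lambda,
\]
is an \emph{injective} homomorphism of abelian groups: it is additive, and if $\varepsilon\circ\lambda=\varepsilon\circ\lambda'$ then for all $c\in C$, $r\in R$ one has $\lambda(c)(r)=(r\lambda(c))(1)=\lambda(rc)(1)=\lambda'(rc)(1)=\lambda'(c)(r)$, so $\lambda(c)=\lambda'(c)$ in $\widehat{R}$ for every $c$, i.e.\ $\lambda=\lambda'$. (The $R$-linearity of $\lambda,\lambda'$ is used essentially here; this is precisely the step that breaks down when $\mathrm{soc}(A)$ is not cyclic.) In particular, for each $\tau\in G$ the character $c\mapsto\varepsilon(\lambda(c)\tau)$ of $C$ is $\widehat{\lambda^{\tau}}$, and distinct elements of $\mathrm{Hom}_R(C,A)$ — and distinct elements of any $G$-orbit therein — give distinct characters of $C$.

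Now the Fourier step. Writing $\mathrm{swc}(x,\bar a)=\sum_i\mathbf{1}_{\bar a}(x_i)$ with $\mathbf{1}_{\bar a}$ the indicator of a $G$-orbit, swc-preservation is equivalent to $\sum_i\phi(\lambda_i(c))=\sum_i\phi(\mu_i(c))$ for all $c\in C$ and all $G$-invariant $\phi\colon A\to\mathbb{C}$. Applying this to the $G$-invariant function $\phi(a)=\sum_{\tau\in G}e^{2\pi i\,\varepsilon(a\tau)}$ gives, as $\mathbb{C}$-valued functions on $C$ (characters written multiplicatively),
\[
\sum_{i}\sum_{\tau\in G}\widehat{\lambda_i^{\tau}}\;=\;\sum_{i}\sum_{\tau\in G}\widehat{\mu_i^{\tau}} .
\]
Both sides are sums of characters of the finite abelian group $C$, so linear independence of distinct characters yields $|\{(i,\tau):\widehat{\lambda_i^{\tau}}=\psi\}|=|\{(i,\tau):\widehat{\mu_i^{\tau}}=\psi\}|$ for every $\psi\in\widehat{C}$, and by the injectivity above this reads $|\{(i,\tau):\lambda_i^{\tau}=\nu\}|=|\{(i,\tau):\mu_i^{\tau}=\nu\}|$ for every $\nu\in\mathrm{Hom}_R(C,A)$. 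Summing $\nu$ over a fixed $G$-orbit $O$ and using that $\lambda_i\in O$ iff $\lambda_i^{\tau}\in O$ for all $\tau$, the two sides become $|G|\cdot|\{i:\lambda_i\in O\}|$ and $|G|\cdot|\{i:\mu_i\in O\}|$; cancelling $|G|$ gives \eqref{eq:goal}, and hence the theorem. The Frobenius-bimodule clause is a special case, since a Frobenius bimodule has cyclic left socle.

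I expect the only genuinely delicate point to be the injectivity of $\lambda\mapsto\varepsilon\circ\lambda$ on $\mathrm{Hom}_R(C,A)$ — equivalently, that the single ``evaluation at $1$'' character of $A$ already separates $R$-linear maps into $A$ — because that is exactly where the cyclic-socle hypothesis (via $A\hookrightarrow\widehat{R}$) is indispensable; everything else is the standard linear independence of characters together with careful bookkeeping of the left action of $R$ against the right action of $G$.
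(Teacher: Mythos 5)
Your argument is correct, and it is essentially the proof given in the sources the paper cites (the theorem is stated here by reference to \cite{r3}, \cite{r4} without a proof of its own): the reduction to counting coordinate functionals in $G$-orbits of $\mathrm{Hom}_R(C,A)$, the identification $\mathrm{Hom}_R(C,\widehat{R})\cong\widehat{C}$ via evaluation at $1$ (which is exactly where the embedding $A\hookrightarrow\widehat{R}$, i.e.\ the cyclic-socle hypothesis, is used), and the linear independence of characters applied to the $G$-averaged exponential are precisely Wood's original scheme from \cite{r5} as adapted in \cite{r4}. No gaps; the bookkeeping of the right $G$-action against the left $R$-action is handled correctly.
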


\section{Annihilator Weight}

We now define a \emph{new }notion (the Midway!) on which we'll depend in the rest of this paper.
\begin{definition}(Annihilator Weight) On $_RA$, define an equivalence relation $\approx$ by $a\approx b$ if $\mathrm{Ann}_a=\mathrm{Ann}_b$, where $a$ and $b$ are any two elements in $A$ and $\mathrm{Ann}_a=\{r\in R{}|{} ra=0\}$ is the annihilator of $a$. Clearly, $\mathrm{Ann}_a$ is a left ideal.

Now, on $A^n$ we can define the annihilator weight $aw$ that counts the number of components in each orbit. \end{definition}

\textbf{Remark:} It is easily seen that  the EP  for Hamming weight implies  the EP for swc, and the EP for $aw$ as well.

\begin{lemma}\label{lemma1} Let $_RA$ be a pseudo-injective module. Then for any two elements $a$ and $b$ in $A$, $a\approx b$ if and only if $a\sim b$ ($\sim$ corresponds to the action of the whole group $\mathrm{Aut}_R(A)$).\end{lemma}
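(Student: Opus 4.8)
The plan is to prove the two implications separately. The direction $a\sim b\Rightarrow a\approx b$ uses nothing about pseudo-injectivity: if $a=b\tau$ with $\tau\in\mathrm{Aut}_R(A)$, then $ra=(rb)\tau$ for every $r\in R$, so injectivity of $\tau$ gives $ra=0\iff rb=0$, i.e. $\mathrm{Ann}_a=\mathrm{Ann}_b$.

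For $a\approx b\Rightarrow a\sim b$, assume $\mathrm{Ann}_a=\mathrm{Ann}_b$. First I would record the evident map $\phi\colon Ra\to Rb$, $ra\mapsto rb$: equality of the annihilators is exactly what makes it well defined and injective, and it is onto $Rb$ and $R$-linear, with inverse $rb\mapsto ra$, so $\phi$ is an $R$-isomorphism of $Ra$ onto $Rb$. By pseudo-injectivity, $\phi$ extends to some $f\in\mathrm{End}_R(A)$ and $\phi^{-1}$ extends to some $g\in\mathrm{End}_R(A)$; writing composition left to right ($x(fg)=(xf)g$), one has $fg=\mathrm{id}$ on $Ra$ and $gf=\mathrm{id}$ on $Rb$.

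The substantive step, and the one I expect to be the main obstacle, is to replace $f$ by an actual automorphism of $A$ that still sends $a$ to $b$; here I would use that $A$ is finite. Since $\mathrm{End}_R(A)$ is a finite monoid, pick $m\ge 1$ with $y^m$ idempotent for every $y\in\mathrm{End}_R(A)$, and set $e=(fg)^m$, $e'=(gf)^m$. Then $e,e'$ are idempotents, $Ra\subseteq\mathrm{Im}(e)$ and $Rb\subseteq\mathrm{Im}(e')$ (because $fg$, resp. $gf$, fixes $Ra$, resp. $Rb$, pointwise), and the formal identities $ef=fe'$ and $ge=e'g$ (obtained by telescoping $fgfg\cdots$) show that $f$ maps $\mathrm{Im}(e)$ into $\mathrm{Im}(e')$ and $g$ maps $\mathrm{Im}(e')$ into $\mathrm{Im}(e)$. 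On $\mathrm{Im}(e)$ the element $(fg)^m=e$ acts as the identity, so $fg$ is a bijection of $\mathrm{Im}(e)$ and hence $f|_{\mathrm{Im}(e)}$ is injective; symmetrically $g|_{\mathrm{Im}(e')}$ is injective, and comparing cardinalities forces $f|_{\mathrm{Im}(e)}$ to be an isomorphism of $\mathrm{Im}(e)$ onto $\mathrm{Im}(e')$. Finally, from $A=\mathrm{Im}(e)\oplus\mathrm{Ker}(e)=\mathrm{Im}(e')\oplus\mathrm{Ker}(e')$ and $\mathrm{Im}(e)\cong\mathrm{Im}(e')$, the Krull--Schmidt theorem (valid since $A$ has finite length) gives $\mathrm{Ker}(e)\cong\mathrm{Ker}(e')$; gluing $f|_{\mathrm{Im}(e)}$ to any isomorphism $\mathrm{Ker}(e)\to\mathrm{Ker}(e')$ produces $\tau\in\mathrm{Aut}_R(A)$ with $\tau|_{\mathrm{Im}(e)}=f|_{\mathrm{Im}(e)}$. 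Since $a\in Ra\subseteq\mathrm{Im}(e)$ and $f(a)=\phi(a)=b$, we get $a\tau=b$, whence $a\sim b$.

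An alternative upgrading step is to use that a pseudo-injective module is automorphism-invariant: extend $\phi$ to an automorphism of the injective hull $E(A)$ (again via Krull--Schmidt, now inside $E(A)$) and restrict to $A$. I prefer the route above since it stays inside $A$ and only uses finiteness.
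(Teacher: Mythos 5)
Your proof is correct, and in the converse direction it takes a genuinely different (more self-contained) route than the paper. The paper's argument is short: it builds the same isomorphism $\phi\colon Ra\to Rb$, $ra\mapsto r+\mathrm{Ann}_a\mapsto rb$, and then simply invokes Proposition 5.1 of Wood's 2009 notes, which asserts that for a finite pseudo-injective module any isomorphism between submodules of $A$ extends to an \emph{automorphism} of $A$. You instead re-prove exactly that upgrading step from scratch: pseudo-injectivity only hands you endomorphisms $f\supseteq\phi$ and $g\supseteq\phi^{-1}$, and your idempotent-power argument in the finite monoid $\mathrm{End}_R(A)$ (with the telescoping identities $ef=fe'$, $ge=e'g$, the decompositions $A=\mathrm{Im}(e)\oplus\mathrm{Ker}(e)$, and Krull--Schmidt cancellation to match the kernels) is a complete and correct proof that $f$ can be replaced by an automorphism fixing its action on $Ra$. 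All the individual steps check out: $fg$ fixes $Ra$ pointwise because $g$ restricts to $\phi^{-1}$ on $Rb$, so $Ra\subseteq\mathrm{Im}(e)$ and $a\tau=a\phi=b$. What the paper's citation buys is brevity; what your argument buys is independence from the external reference and an explicit demonstration of where finiteness of $A$ enters (both in finding the idempotent power and in the cardinality comparison showing $f|_{\mathrm{Im}(e)}$ is onto $\mathrm{Im}(e')$). Your closing remark about automorphism-invariant modules is a third viable route, but the one you wrote out is already complete.
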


\begin{proof}If $a\sim b$, this means $a=b\tau$ for some $\tau \in \mathrm{Aut}_R(A)$, and consequently $\mathrm{Ann}_a=\mathrm{Ann}_b$.

Conversely, if $a\approx b$, then we have (as left $R$-modules) $$Ra\cong{} _RR/\mathrm{Ann}_a={} _RR/\mathrm{Ann}_b\cong Rb,$$with $ra\mapsto r+\mathrm{Ann}_a\mapsto rb$. By Proposition 5.1 in \cite{r7}, since $A$ is pseudo-injective, the isomorphism $Ra\rightarrow Rb\subseteq A$ extends to an automorphism of $A$ taking $a$ to $b$.\end{proof}

\begin{corollary}\label{cor1}If $_RA$ is a pseudo-injective module, then the EP with respect to $swc$ built on $\mathrm{Aut}_R(A)$ is equivalent to the EP with respect to $aw$.\end{corollary}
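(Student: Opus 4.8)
The plan is to observe that, for a pseudo-injective alphabet $A$, Lemma \ref{lemma1} already does all the work: it identifies the two equivalence relations underlying the two weights. Indeed, the relation $\approx$ used to define $aw$ coincides, on a pseudo-injective $A$, with the relation $\sim$ used to define the swc built on the full group $G=\mathrm{Aut}_R(A)$. Hence the orbit spaces $A/{\approx}$ and $A/G$ are literally the same partition of $A$: every $\approx$-class is a single $G$-orbit and conversely. Under this canonical identification the two counting functions agree, since for every $x=(x_1,\dots,x_n)\in A^n$, every class $\mathcal{O}$, and any representative $a\in\mathcal{O}$ one has $\mathrm{aw}(x,\mathcal{O})=|\{i:x_i\approx a\}|=|\{i:x_i\sim a\}|=\mathrm{swc}(x,a)$. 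So for an $R$-linear isomorphism $f:C_1\to C_2$ between linear codes in $A^n$, the statement ``$f$ preserves $aw$'' and the statement ``$f$ preserves $\mathrm{swc}$'' are one and the same condition.

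Next I would note that the class of admissible extensions is also the same on both sides. The swc here is built on $G=\mathrm{Aut}_R(A)$, so the maps allowed as extensions are the $G$-monomial transformations of $A^n$; and since $\approx$ is exactly the orbit relation of $G$, the monomial transformations naturally attached to $aw$ are again the $\mathrm{Aut}_R(A)$-monomial transformations of $A^n$. Thus ``extends to a $G$-monomial transformation'' means the same thing in the swc setting and in the $aw$ setting. Combining the two observations, the assertion ``for every $n$ and every pair of codes $C_1,C_2\subseteq A^n$, every swc-preserving $R$-linear isomorphism $C_1\to C_2$ extends to a $G$-monomial transformation'' is word-for-word the assertion obtained by replacing swc with $aw$, with the quantifiers ranging over the same data. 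This yields the equivalence of the two extension properties.

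I do not expect a genuine obstacle here: all the substance is in Lemma \ref{lemma1}, which in turn rests on the pseudo-injective lifting property (Proposition 5.1 of \cite{r7}), and the corollary is a direct unwinding of the definitions once that lemma is available. The only point I would take care to state explicitly is that the identification of the orbit spaces $A/{\approx}$ and $A/G$ is canonical rather than merely a bijection, so that the weights $aw$ and $\mathrm{swc}$ are equal as functions and not just ``equivalent up to relabeling orbits''; with that remark in place the proof is essentially a one-line appeal to Lemma \ref{lemma1}.
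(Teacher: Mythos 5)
Your proposal is correct and matches the paper's (implicit) reasoning: the paper states this corollary without proof, treating it as an immediate consequence of Lemma \ref{lemma1}, exactly as you do by identifying the orbit spaces $A/{\approx}$ and $A/\mathrm{Aut}_R(A)$ and hence the two weight functions and the two notions of monomial extension. Your explicit remark that the identification is canonical (so the weights are literally equal, not merely equivalent up to relabeling) is a worthwhile clarification of a point the paper leaves unstated.
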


\begin{theorem}\label{Midway}Let $R$ be a principal ideal ring,  $_RA$  a pseudo-injective module, and let $C$ be a submodule of $A^n$ for some $n$. Then a monomorphism  $f:C\rightarrow A^n$ preserves Hamming weight if and only if it preserves swc built on $\mathrm{Aut}_R(A)$.\end{theorem}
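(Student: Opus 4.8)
The plan is to prove the two implications separately; one is immediate and the other carries all of the hypotheses.

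For the easy implication (swc preserved $\Rightarrow$ Hamming weight preserved), I would first observe that $\{0\}$ is a single orbit of the $\mathrm{Aut}_R(A)$-action on $A$, since every $R$-automorphism is additive and hence fixes $0$. Consequently, for every $x\in A^n$ the orbit count $\mathrm{swc}(x,[0])$ is exactly the number of zero components of $x$, i.e.\ $n$ minus its Hamming weight. So a monomorphism that preserves all orbit counts in particular preserves the number of zero components, hence the Hamming weight. This direction uses neither pseudo-injectivity nor the principal ideal hypothesis.

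For the substantive implication (Hamming weight preserved $\Rightarrow$ swc preserved), I would invoke Lemma~\ref{lemma1}: since $A$ is pseudo-injective, the $\mathrm{Aut}_R(A)$-orbit relation $\sim$ coincides with the annihilator relation $\approx$, so the swc built on $\mathrm{Aut}_R(A)$ is literally the annihilator weight $aw$, and it suffices to show $f$ preserves $aw$. Fix $x=(x_1,\dots,x_n)\in C$. Because $C$ is a submodule and $f$ is $R$-linear, $rx\in C$ and $f(rx)=rf(x)$ for every $r\in R$; applying the Hamming-weight hypothesis to $c=rx$ gives, for all $r\in R$,
$$\bigl|\{\,i:\; rx_i\neq 0\,\}\bigr|=\bigl|\{\,i:\; r f(x)_i\neq 0\,\}\bigr|,$$
equivalently $\bigl|\{i: Rr\subseteq\mathrm{Ann}_{x_i}\}\bigr|=\bigl|\{i: Rr\subseteq\mathrm{Ann}_{f(x)_i}\}\bigr|$, using that each $\mathrm{Ann}_{x_i}$ is a left ideal. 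Now the principal ideal hypothesis enters: every left ideal $I$ of $R$ equals $Rr$ for some $r$, so this equality of ``contains $I$'' counts holds for \emph{every} left ideal $I$.

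It then remains to recover the exact annihilator distribution from the containment data. Writing $M(I)$ and $N(I)$ for the number of components of $x$ whose annihilator respectively contains $I$ and equals $I$ (and $M'(I),N'(I)$ for $f(x)$), we have just shown $M=M'$, while $M(I)=\sum_{J\supseteq I}N(J)$ on the finite lattice of left ideals of $R$. M\"obius inversion over this finite lattice yields $N=N'$, i.e.\ $x$ and $f(x)$ have equally many components with any prescribed annihilator, hence equally many in each $\approx$-class; that is, $f$ preserves $aw$ at $x$, and by Lemma~\ref{lemma1} it preserves the swc built on $\mathrm{Aut}_R(A)$ at $x$. Since $x\in C$ was arbitrary, $f$ preserves swc. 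I expect the main obstacle to be precisely this last step: making the inversion rigorous (finiteness of $R$ makes the ideal lattice finite, so M\"obius inversion applies) and being careful that the principal ideal hypothesis --- every one-sided ideal principal on the relevant side --- is exactly what upgrades the easily available data (Hamming weights of the scalar multiples $rx$) into containment counts for \emph{all} left ideals, not merely the principal ones.
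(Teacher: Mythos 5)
Your proof is correct and follows essentially the same route as the paper: the easy direction is handled identically (the zero orbit recovers Hamming weight), and the substantive direction likewise reduces via Lemma~\ref{lemma1} to the annihilator weight and exploits that every left ideal $I=Rr$ is generated by a single element, so that the Hamming weight of $rx$ counts the components whose annihilator contains $I$. The only difference is in the bookkeeping at the end: the paper converts these containment counts into exact annihilator counts by iteratively peeling off the components with a maximal annihilator, whereas you do it in one step by M\"obius inversion over the finite lattice of left ideals --- an equivalent (and arguably tidier) finish.
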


\begin{proof} The ``if'' part is direct. For the converse,
we'll use that any left ideal $I$ contains an element $e_I$ that doesn't belong to any other left ideal not containing $I$. Now, if \begin{equation}\label{eq1}(c_1,c_2,\ldots,c_n)f=(b_1,b_2,\ldots,b_n),\end{equation} choose, from $c_1,c_2,\ldots,c_n;b_1,b_2,\ldots,b_n$, a component with a maximal annihilator  $I$. Act on equation (\ref{eq1}) by $e_I$, then the only zero places  are those of the components in equation (\ref{eq1}) with annihilator $I$, and the preservation of Hamming weight gives the preservation of $I$-annihilated components. Omit these components from the list $c_1,c_2,\ldots,c_n;b_1,b_2,\ldots,b_n$ and choose one with the new maximal, and repeat. This gives that $f$ preserves $aw$ and hence, by Lemma \ref{lemma1}, $f$ preserves swc built on $\mathrm{Aut}_R(A)$.
\end{proof}

\begin{corollary}\label{cor2}If $_RA$ is a module alphabet, then $A$ has the extension property with respect to swc if and only if $\mathrm{soc}(A)$ is cyclic.\end{corollary}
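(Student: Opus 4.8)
The plan is to split Corollary \ref{cor2} into its two halves, the ``if'' direction being immediate and the converse carrying all the content. If $\mathrm{soc}(A)$ is cyclic, then Theorem \ref{Noha} already yields the extension property with respect to the swc built on every subgroup of $\mathrm{Aut}_R(A)$, in particular on $\mathrm{Aut}_R(A)$ itself. For the converse I would argue by contraposition, following the strategy of \cite{r6} (carried out there for Hamming weight) but pushing the swc --- equivalently, by Corollary \ref{cor1} and Lemma \ref{lemma1}, the annihilator weight --- through each step.

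So assume $\mathrm{soc}(A)$ is not cyclic. By Proposition \ref{prop} there is an index $i$ with $s_i>\mu_i$; put $k=s_i$, $\bar R=\mathbb{M}_{\mu_i}(\mathbb{F}_{q_i})$, $\bar A=\mathbb{M}_{\mu_i\times k}(\mathbb{F}_{q_i})$. I would first record the structural facts I need: $\bar R$ is a (left) principal ideal ring, since every left ideal of a semisimple ring is generated by an idempotent; $\bar A$ is pseudo-injective, being a module over the semisimple ring $\bar R$; via the isomorphism (\ref{iso1}) the pullback of $\bar A$ to $R$ is the $T_i$-homogeneous component $M:=s_iT_i$ of $\mathrm{soc}(A)$, which is fully invariant in $A$; and, since $\mathrm{rad}R$ annihilates $\mathrm{soc}(A)$, the $R$-endomorphisms of $M$ coincide with the $\bar R$-endomorphisms of $\bar A$, so $\mathrm{Aut}_R(M)=\mathrm{Aut}_{\bar R}(\bar A)$. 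Because $k>\mu_i$, the module $\bar A$ has non-cyclic socle. Now Wood's Theorem \ref{wood} (with $m=\mu_i$, $q=q_i$) supplies codes $\bar C_+,\bar C_-\subseteq\bar A^N$ and an $\bar R$-linear isomorphism $\bar f$ that preserves Hamming weight but extends to no monomial transformation of $\bar A^N$; and since $\bar R$ is a principal ideal ring and $\bar A$ is pseudo-injective, Theorem \ref{Midway} promotes $\bar f$ to an isometry for the swc built on $\mathrm{Aut}_{\bar R}(\bar A)$. Transporting this configuration along the identification $\bar A\cong M\hookrightarrow A$ produces codes $C_+,C_-\subseteq M^N\subseteq A^N$ and an $R$-linear isomorphism $f:C_+\to C_-$. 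Because $M$ is fully invariant, any $\mathrm{Aut}_R(A)$-monomial transformation of $A^N$ extending $f$ would restrict to an $\mathrm{Aut}_R(M)=\mathrm{Aut}_{\bar R}(\bar A)$-monomial transformation of $\bar A^N$ extending $\bar f$, which is impossible; hence $f$ admits no monomial extension over $A$.

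The point that still needs care --- and where I expect the main obstacle --- is that $f$ a priori only ``sees'' the automorphisms of $M$ that come from restricting $\mathrm{Aut}_R(A)$, so one must still show that $f$ preserves the swc built on the full group $\mathrm{Aut}_R(A)$. Here the annihilator weight does the bookkeeping: by the first half of the proof of Theorem \ref{Midway}, $\bar f$ already preserves $aw$, and $aw$ is intrinsic, so $f$ preserves $aw$ over $A$; by Lemma \ref{lemma1} applied to the pseudo-injective $\bar A$, the annihilator classes inside $M$ are precisely the $\mathrm{Aut}_{\bar R}(\bar A)=\mathrm{Aut}_R(M)$-orbits. It then remains to reconcile, on the coordinates actually used, the $\mathrm{Aut}_R(A)$-orbits with these annihilator classes; this is exactly the step that makes the pull-back of \cite{r6}, Theorem 5.2 (equivalently \cite{r7}, Theorem 6.4) work, and I would run that pull-back so that the coordinates land where the two partitions coincide, so that $aw$-preservation of $f$ forces preservation of the swc built on $\mathrm{Aut}_R(A)$. (For pseudo-injective $A$ this reconciliation is free, being Corollary \ref{cor1}, so the difficulty is genuinely confined to general alphabets.) Granting it, $f$ is an swc-isometry over $A$ with no monomial extension, so $A$ fails the extension property with respect to swc, which completes the contrapositive.
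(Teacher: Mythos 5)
Your proposal retraces the paper's proof of Corollary \ref{cor2} almost step for step: Theorem \ref{Noha} for the ``if'' direction, and for the converse the passage via Proposition \ref{prop} to an index with $s_i>\mu_i$, Wood's Theorem \ref{wood} over $B=\mathbb{M}_{\mu_i\times s_i}(\mathbb{F}_{q_i})$, the observation that $\mathbb{M}_{\mu_i}(\mathbb{F}_{q_i})$ is a principal ideal ring and $B$ is injective (hence pseudo-injective), so that Theorem \ref{Midway} upgrades Hamming-weight preservation to preservation of the swc built on $\mathrm{Aut}_{\mathbb{M}_{\mu_i}(\mathbb{F}_{q_i})}(B)=\mathrm{Aut}_R(s_iT_i)$, and finally the pullback of $C_\pm$ into $(s_iT_i)^N\subset A^N$. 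Your non-extendability argument (restricting a hypothetical monomial transformation of $A^N$ to the fully invariant $(s_iT_i)^N$) is a valid variant of the paper's, which instead notes that $C_+$ has an identically zero component while $C_-$ does not, and monomial transformations permute coordinates.

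However, the step you explicitly leave open --- ``Granting it'' --- is not a deferrable loose end; it is the only point where the corollary goes beyond what Theorem \ref{Midway} already delivers, so a proof that assumes it is incomplete. The paper closes it in one line: every $\tau\in\mathrm{Aut}_R(A)$ restricts to an automorphism of the fully invariant submodule $s_iT_i$, ``hence'' preservation of the swc built on $\mathrm{Aut}_R(s_iT_i)$ gives preservation of the swc built on $\mathrm{Aut}_R(A)$. You should weigh your own worry against that argument: restriction exhibits the image of $\mathrm{Aut}_R(A)$ as a subgroup of $\mathrm{Aut}_R(s_iT_i)$, so the $\mathrm{Aut}_R(A)$-orbits inside $s_iT_i$ a priori \emph{refine} the $\mathrm{Aut}_R(s_iT_i)$-orbits (equivalently, the annihilator classes), whereas the standard ``sum over the finer classes'' argument only passes from finer to coarser. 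The reconciliation you ask for --- that on the component values actually occurring the two partitions coincide --- is therefore exactly what must be proved, either by showing that enough automorphisms of $s_iT_i$ extend to automorphisms of $A$, or by inspecting the concrete components of Wood's codes $C_\pm$; neither appears in your write-up. Until that is supplied, your argument establishes only that $f$ preserves the swc built on $\mathrm{Aut}_R(s_iT_i)$, not the swc built on $\mathrm{Aut}_R(A)$ that the corollary requires.
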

\begin{proof} \begin{par}The ``if'' part is answered by Theorem \ref{Noha}. Now, if $\mathrm{soc}(A)$ is not cyclic, then by Proposition \ref{prop}, there is an index $i$ such that $s_i>\mu_i$, where $s_iT_i\subset \mathrm{soc}(A)\subset A$. Recall that $T_i$  is the pullback to $R$ of the matrix module $_{\mathbb{M}_{\mu_i}(\mathbb{F}_{q_i})}\mathbb{M}_{\mu_i\times 1}(\mathbb{F}_{q_i})$, so that $s_iT_i$ is the pullback to $R$ of the $\mathbb{M}_{\mu_i}(\mathbb{F}_{q_i})$-module $B=\mathbb{M}_{\mu_i\times s_i}(\mathbb{F}_{q_i})$. Theorem \ref{wood} implies the existence of linear codes $C_+,C_-\subset B^N$, and an isomorphism $f:C_+\rightarrow C_-$ that preserves Hamming weight, yet $f$ does not extend to a monomial transformation of $B^N$.  But the ring $\mathbb{M}_{\mu_i}(\mathbb{F}_{q_i})$ is a principal ideal ring (in fact, more is true, Theorem ix.3.7, \cite{rw}), besides, $B$ is injective, and then Theorem \ref{Midway} implies that $f$ preserves swc built on $\mathrm{Aut}_{\mathbb{M}_{\mu_i}(\mathbb{F}_{q_i})}(B)$. \end{par}

\begin{par}Now, a little notice finishes the work. 
The isomorphism in equation (\ref{iso1}) and the projection mappings $R\rightarrow R/\mathrm{rad}R\rightarrow\mathbb{M}_{\mu_i}(\mathbb{F}_{q_i})$ allow us to consider the whole situation for $C_\pm$ as $R$-modules. Since $B$ pulls back to $s_iT_i$, we have
$C_\pm\subset(s_iT_i)^N\subset{\mathrm{soc}(A)}^N\subset A^N$, as $R$-modules. Thus $C_\pm$ are linear codes over $A$ that are isomorphic through an isomorphism preserving swc built on $\mathrm{Aut}_R(s_iT_i)$. Also, any automorphism of $A$ restricts to an automorphism of $s_iT_i$, hence the isomorphism preserves swc built on $\mathrm{Aut}_R(A)$. However, this isomorphism does not extend to a monomial transformation of $A^N$, since, as appears in the proof of Theorem \ref{wood} (found in \cite{r6}), $C_+$ has an identically zero component, while $C_-$ does not. \end{par}\end{proof}

 \end{document}